\newcommand*{\barfix}[2][.175ex]{%
  \mathpalette{\@barfix{#1}}{#2}%
}
\newcommand*{\@barfix}[3]{%
  % #1: space
  % #2: math style
  % #3: symbol
  \vbox{%
    \kern#1\relax
    \hbox{$#2#3\m@th$}%
  }%
}
\newcommand{\linebreakand}{%
  \end{@IEEEauthorhalign}
  \hfill\mbox{}\par
  \mbox{}\hfill\begin{@IEEEauthorhalign}
}
\newtheorem{thm}{Theorem}[section]
\newtheorem{claim}[thm]{Claim}
\theoremstyle{remark}
\newtheorem{remark}{Remark}
\title{On vertex Ramsey graphs with forbidden subgraphs}
\author{Sahar Diskin \qquad Ilay Hoshen \\
        Michael Krivelevich\footnote{Research supported in part by USA-Israel BSF grant 2018267.} \qquad Maksim Zhukovskii\footnote{Research supported in part by Israel ISF grant 2110/22.}\\
        School of Mathematical Sciences, Tel Aviv University
}
\date{}
\begin{document}

\maketitle
\begin{abstract}
A classical vertex Ramsey result due to Ne\v{s}et\v{r}il and R\"odl states that given a finite family of graphs $\mathcal{F}$, a graph $A$ and a positive integer $r$, if every graph $B\in\mathcal{F}$ has a $2$-vertex-connected subgraph which is not a subgraph of $A$, then there exists an $\mathcal{F}$-free graph which is vertex $r$-Ramsey with respect to $A$. We prove that this sufficient condition for the existence of an $\mathcal{F}$-free graph which is vertex $r$-Ramsey with respect to $A$ is also \textit{necessary} for large enough number of colours $r$.

We further show a generalisation of the result to a family of graphs and the typical existence of such a subgraph in a dense binomial random graph.
\end{abstract}

\section{Introduction}

Let $A$ be a graph and let $r$ be a positive integer. We say that a graph $G$ is \emph{(vertex) $r$-Ramsey} with respect to $A$ if in every colouring of the vertices of $G$ in $r$ colours there exists a monochromatic copy of $A$. The existence of $r$-Ramsey graphs is straightforward: the complete graph $K_n$ is $r$-Ramsey with respect to $A$ for every $n\geq r(|V(A)|-1)+1$. It is thus natural to ask about the existence of \textit{sparse} Ramsey graphs. One of the ways to define sparseness is to avoid copies of a given graph $B$ (or more generally of any graph from a given finite graph family $\mathcal{F}$) in $G$. Let us call a graph $G$ \emph{$\mathcal{F}$-free} if it does not contain a subgraph isomorphic to $B$ for every $B\in\mathcal{F}$.

Perhaps the most studied case is when both $A$ and $B$ are complete graphs on $s$ and $t$ vertices, respectively, where $t>s\ge 2$. Denote by $f_{s,t}(n)$ the minimum over all $K_t$-free graphs $G$ on $[n]:=\{1,\ldots,n\}$ of the maximum number of vertices in an induced $K_s$-free subgraph of $G$. %In other words, for given $s,t$, there exists an $n$-vertex $K_t$-free graph such that every induced $(f_{s,t}(n)+1)$-vertex subgraph of it contains a copy of $K_s$, and this is optimal. 
Erd\H{o}s and Rogers \cite{ER} proved that, for a certain $\varepsilon=\varepsilon(s)>0$, $f_{s,s+1}(n)\leq n^{1-\varepsilon}$ (note that this implies that for every $s\geq 2$ and $r\geq 2$, there exists a $K_{s+1}$-free graph $G$ which is $r$-Ramsey with respect to $K_s$). The result of Erd\H{o}s and Rogers was subsequently refined by Bollob\'{a}s and Hind~\cite{BH} and Krivelevich \cite{Krivelevich}. Let us also mention that subsequent works by Dudek, Retter and R\"{o}dl \cite{Dudek} and by Dudek and R\"{o}dl \cite{DR} determined $f_{s,s+1}(n)$ up to a power of $\log n$ factor, strengthened the known bounds for $f_{s,s+2}(n)$, and further improved the bounds for $f_{s,s+k}(n)$ when $s,k$ are large enough.

Considering general graphs $A$ and $B$ (and in fact, a family of graphs $B$), Ne\v{s}et\v{r}il and R\"odl \cite{NR vertex} proved the following (see also \cite{BW}): 
\begin{thm}[\cite{NR vertex}]
Let $\mathcal{F}$ be a finite family of graphs and let $A$ be a graph. Let $r\geq 2$ be an integer. If every graph from $\mathcal{F}$ has a $2$-vertex-connected subgraph which is not a subgraph of $A$, then there exists an $\mathcal{F}$-free graph which is vertex $r$-Ramsey with respect to $A$.
\label{th:sufficient}
\end{thm}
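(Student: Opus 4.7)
The plan is to reduce to a family of $2$-vertex-connected obstructions and then build the Ramsey graph so that every block is a copy of $A$, using a partite-amalgamation construction of Ne\v{s}et\v{r}il--R\"odl type.

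\textbf{Reduction.} For each $B \in \mathcal{F}$, fix by hypothesis a $2$-vertex-connected subgraph $B^{*} \subseteq B$ that is not a subgraph of $A$, and set $\mathcal{F}^{*} = \{B^{*} : B \in \mathcal{F}\}$. Since $B^{*} \subseteq B$, any $\mathcal{F}^{*}$-free graph is automatically $\mathcal{F}$-free. Hence it suffices to produce a graph $G$ that is vertex $r$-Ramsey for $A$ and in which every $2$-vertex-connected subgraph is a subgraph of $A$: such a $G$ cannot contain any $B^{*}$, because $B^{*}$ is $2$-vertex-connected but is not a subgraph of $A$.

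\textbf{Construction.} Write $a = |V(A)|$. I would build $G$ together with a family $\mathcal{A}$ of copies of $A$ in $G$ such that:
\begin{enumerate}
    \item $G = \bigcup_{A' \in \mathcal{A}} A'$;
    \item every $r$-coloring of $V(G)$ yields a monochromatic $A' \in \mathcal{A}$;
    \item every block (maximal $2$-vertex-connected subgraph) of $G$ is a member of $\mathcal{A}$.
\end{enumerate}
Property (iii) is the key invariant: since every $2$-vertex-connected subgraph of a graph is contained in some block, (iii) immediately gives that every $2$-vertex-connected subgraph of $G$ is a subgraph of $A$, as required. To achieve all three, I would run the partite construction in $a$ stages, one per vertex of $A$. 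At stage $i$ one takes sufficiently many disjoint copies of the graph from stage $i-1$ and amalgamates them along a single shared vertex representing the $i$-th ``role'' in the colorful $A$-copy, the pivot being chosen via pigeonhole so that any $r$-coloring is forced to assign a fixed colour to position $i$ in many surviving colorful copies. Starting from $G_{0} = A$ (whose unique block is $A$), the invariant (iii) is preserved at each step by the elementary fact from the block-cut-tree theorem that the blocks of a single-vertex amalgam $G_{1} \cup_{v} G_{2}$ are exactly the blocks of $G_{1}$ together with the blocks of $G_{2}$.

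\textbf{Main obstacle.} The principal difficulty is calibrating the pigeonhole schedule across the $a$ stages so that (ii) emerges at the end, while every amalgamation remains along a \emph{single} vertex in order to preserve (iii). This requires tower-type parameter choices: the number of disjoint copies used at stage $i$ is dictated by the number of possible colour-patterns on the stage-$(i-1)$ graph. Conceptually the construction is the standard Ne\v{s}et\v{r}il--R\"odl partite amalgamation, but the structural bookkeeping needed to maintain single-vertex gluing throughout (and thus (iii)) is what makes the argument non-trivial. Once (i)--(iii) are established, the final $\mathcal{F}$-freeness is immediate via the reduction.
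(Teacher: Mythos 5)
Your reduction step is fine, but the construction you propose cannot work: properties (ii) and (iii) are mutually exclusive. If every block of $G$ is a copy of $A$ (so in particular $A$ is $2$-connected and every block has exactly $|V(A)|$ vertices), then $G$ is not even $2$-Ramsey with respect to $A$. Indeed, root the block--cut tree of each component of $G$ and colour the blocks in BFS order: when a block is reached, only its parent cut vertex has been coloured, so one of its remaining vertices can be given the opposite colour, and hence no block is monochromatic. Since $A$ is $2$-connected, every copy of $A$ in $G$ lies inside a single block and, that block having only $|V(A)|$ vertices, spans it; a monochromatic copy of $A$ would therefore force a monochromatic block. Thus this $2$-colouring has no monochromatic $A$. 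The same obstruction kills the amalgamation scheme at its root: gluing vertex-disjoint copies of $G_{i-1}$ at a single shared vertex creates no new copy of a $2$-connected $A$ whatsoever (any such copy lies entirely in one of the glued pieces), so $G_i$ is $r$-Ramsey if and only if $G_{i-1}$ is, and no pigeonhole schedule over the stages can ever produce the Ramsey property. More generally, your target ``every $2$-connected subgraph of $G$ is a subgraph of $A$'' says exactly that $G$ is $A$-degenerate in the paper's terminology, and the whole point of the paper is that such graphs are precisely the ones too sparse to be Ramsey: Theorem \ref{th:main} shows that single-vertex-glued structures are \emph{unavoidable} in Ramsey graphs, not that they are themselves Ramsey.

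The paper's actual proof of Theorem \ref{th:sufficient} goes through Theorem \ref{th:dense_1} and is probabilistic: one takes the union $\mathcal{G}_A(n,p)$ of random copies of $A$ placed with probability $p=n^{1-|V(A)|+\varepsilon}$, which with high probability is so dense in copies of $A$ that every set of $n^{1-\delta_0}$ vertices contains one (hence the graph is $r$-Ramsey for every fixed $r$ once $n$ is large). Crucially, copies of $A$ are allowed to overlap in two or more vertices --- this is unavoidable in any Ramsey construction, by the discussion above --- and the argument only needs that, for each $B\in\mathcal{F}$, the specific overlapping configurations that would create a copy of $B$ (a subgraph $B'$ each of whose inclusion-minimal covers by $A$-copies has every copy meeting the union of the others in at least two vertices) have expected count $O(n^{\ell\varepsilon})=o(\sqrt n)$, so they can all be destroyed by deleting $O(\sqrt n)$ vertices. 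If you want a deterministic partite-amalgamation proof in the spirit of Ne\v{s}et\v{r}il--R\"odl, the amalgamations must identify whole partite classes rather than single vertices, and $\mathcal{F}$-freeness must then be argued by analysing which $2$-connected configurations the amalgamation can create, not by forbidding all overlaps outright.
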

See \cite{RS92, RSZ95, SZ91} for additional results on vertex-Ramsey graphs with forbidden subgraphs.

Our main result shows that the above sufficient condition is also necessary for large enough number of colours $r$. We say that $B$ is an $A$-forest of size $\ell$ if $B=\cup_{i=1}^{\ell}B_i$, where for every $1\le i\le \ell$, $B_i$ is isomorphic to a subgraph of $A$, and for every $i\ge 2$, $\big|V(B_i)\cap V\left(\cup_{j=1}^{i-1}B_j\right)\big|\le 1$.
\begin{thm}\label{th:main}
Let $\ell>0$ be an integer. Let $B$ be an $A$-forest of size $\ell$. Let $r>0$ be an integer such that $r\ge \ell\left(2(|V(A)|-1)(|V(B)|-2)+1\right)$, and let $G$ be an $r$-Ramsey graph with respect to $A$. Then $G$ contains a copy of $B$.
\end{thm}
Let us first note that since $\ell\le |V(B)|$, it suffices to take $r=O(|V(A)||V(B)|^2)$. Furthermore, observe that the above implies the necessity of the condition in Theorem \ref{th:sufficient}, for $r$ large enough. Indeed, let us say that a graph $B$ is \emph{$A$-degenerate}, if every $2$-vertex-connected subgraph of it is a subgraph of $A$. Note that any $A$-degenerate graph can be constructed recursively: (1) any subgraph of $A$ is $A$-degenerate; (2) if $B$ is an $A$-degenerate graph, then a union of $B$ with a subgraph of $A$ that shares with $B$ at most 1 vertex is $A$-degenerate as well. Theorems \ref{th:sufficient} and \ref{th:main} can be formulated in terms of $A$-degenerate graphs: there exists an $\mathcal{F}$-free graph which is $r$-Ramsey with respect to $A$ for all large enough $r$ \textit{if and only if} every graph from $\mathcal{F}$ is not $A$-degenerate.

Note that the case that $B$ consists of $\ell$ vertex-disjoint components, each isomporphic to a subgraph of $A$, is easy since if $G$ is $r$-Ramsey with respect to $A$ then it contains a large enough family of vertex-disjoint copies of $A$. On the other hand, if the components of $B$ are not disjoint, we can proceed by induction, deleting a component $B_i$ intersecting other components, finding a copy of $B-B_i$ using inductive hypothesis and then adjoining to it a correctly placed copy of $B_i$, see details in Section \ref{section:proofs-section}. \\

In the next section, we provide a short proof of Theorem \ref{th:sufficient} for the sake of completeness, followed by the proof of Theorem \ref{th:main}. In Section \ref{remarks}, we discuss generalisations of Theorem \ref{th:sufficient} to a family of graphs (instead of $A$), and the existence of an $\mathcal{F}$-free graph which is $r$-Ramsey with respect to $A$ in a dense enough binomial random graph.

\section{Proofs of Theorems \ref{th:sufficient} and \ref{th:main}}\label{section:proofs-section}
We say that a graph $G$ is \emph{$\varepsilon$-dense} with respect to a graph $A$ if every induced subgraph of $G$ on $\lfloor\varepsilon|V(G)|\rfloor$ vertices contains a copy of $A$. Clearly, if $G$ is $1/r$-dense with respect to $A$, then it is also $r$-Ramsey with respect to $A$. Theorem~\ref{th:sufficient} follows immediately from Theorem \ref{th:dense_1}.
\begin{thm}
Let $\mathcal{F}$ be a finite family of graphs. If there are no $A$-degenerate graphs in $\mathcal{F}$, then there exists a $\delta=\delta(A,\mathcal{F})>0$ such that for all large enough $n$, there exists an $\mathcal{F}$-free $n^{-\delta}$-dense graph on $[n]$ with respect to $A$.
\label{th:dense_1}
\end{thm}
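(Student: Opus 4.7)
For each $B\in\mathcal{F}$, the non-$A$-degeneracy hypothesis lets me fix a $2$-vertex-connected subgraph $B'_B\subseteq B$ with $B'_B\not\subseteq A$. Since every copy of $B$ contains a copy of $B'_B$, a graph that avoids each $B'_B$ is automatically $\mathcal{F}$-free, so it suffices to construct a $\{B'_B:B\in\mathcal{F}\}$-free, $n^{-\delta}$-dense (with respect to $A$) graph on $[n]$.

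The plan is the ``random graph plus deletion'' method: sample $G_0\sim G(n,p)$ with $p=n^{-\alpha}$ for a constant $\alpha=\alpha(A,\mathcal{F})>0$ to be chosen, then delete a sublinear vertex set to destroy all remaining copies of each $B'_B$. The density verification goes by Janson's inequality: for any fixed $S\subseteq[n]$ of size $s=\lfloor n^{1-\delta}\rfloor$, the expected count of copies of $A$ in $G_0[S]$ is $\mu_S=\Theta(s^{|V(A)|}p^{|E(A)|})$, and the second-moment correction $\Delta_S$ is controlled by the subgraph densities of $A$, yielding
\[
\Pr\bigl[A\not\subseteq G_0[S]\bigr]\le \exp\bigl(-\Omega(\mu_S^2/\Delta_S)\bigr);
\]
for $\alpha$ small enough this beats the union bound over the $\binom{n}{s}\le e^{O(s\log n)}$ choices of $S$, giving $\omega(n)$ copies of $A$ in every such subset w.h.p. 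The sparsity verification is Markov's inequality: w.h.p.\ the number of copies of each $B'_B$ in $G_0$ is $O(n^{|V(B'_B)|}p^{|E(B'_B)|})$, which for $\alpha$ large enough is $o(n)$, so greedy vertex deletion removes only $o(n)$ vertices. Density is preserved in the final graph $G$ because any subset of $V(G)$ of size $|V(G)|^{1-\delta'}$ (for a suitable $\delta'<\delta$) contains a subset of $[n]$ of size $n^{1-\delta}$ with $\omega(n)$ copies of $A$ in $G_0$, overwhelmingly more than the $o(n)$ copies that can use a deleted vertex.

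The main obstacle is fitting $\alpha$ into the window dictated by both requirements---roughly, $(|V(B'_B)|-1)/|E(B'_B)|<\alpha<(1-\delta)\,(|V(A)|-1)/|E(A)|$ for every $B\in\mathcal{F}$, with the precise versions involving all subgraphs of $A$ and $B'_B$. This window can be empty (for instance, $A=K_4$ and $B=C_5$ give a lower bound of $4/5$ and an upper bound of $1/2$), and in such regimes the naive vertex-deletion scheme has to be replaced by a more subtle one. I would expect the remedy to use the $2$-vertex-connectivity of $B'_B$ quantitatively---either by deleting edges instead of vertices (noting that a $2$-connected $B'_B$ of order $k$ has at least $k$ edges, which bounds the load of $B'_B$-copies on any single edge of $G_0$), or through a container/LLL-style argument that simultaneously destroys all $B'_B$-copies while leaving a positive fraction of $A$-copies intact in every large subset. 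It is here that I expect the bulk of the technical work to lie.
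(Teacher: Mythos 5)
Your opening reduction---replacing each $B\in\mathcal{F}$ by a single $2$-vertex-connected subgraph $B'_B\not\subseteq A$ and aiming for a $\{B'_B\}$-free dense graph---is correct and matches the paper's first step. But you have accurately diagnosed that your own construction breaks down (the $A=K_4$, $B'=C_5$ window computation is exactly right), and the unspecified ``remedy'' is where the entire proof lives; neither of the two fixes you gesture at is the one that works. Edge deletion does not close the gap: in the $K_4/C_5$ example, at the density $p\approx n^{-2/3}$ forced by the Janson-plus-union-bound step, the expected number of copies of $C_5$ is about $n^{5/3}$ while the number of edges is only about $n^{4/3}$, so you cannot afford to charge one edge per copy, and the per-edge load is polynomially large. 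The LLL/container suggestion is too vague to assess. As it stands, the proposal establishes the statement only when the exponent window is nonempty, which is a genuinely restricted class of pairs $(A,\mathcal{F})$.

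The missing idea is to abandon $G(n,p)$ and randomise over \emph{copies of $A$} rather than over edges. Take the hypergraph on vertex set $\binom{[n]}{2}$ whose hyperedges are all copies of $A$ in $K_n$, keep each copy independently with probability $p=n^{1-a+\varepsilon}$ (where $a=|V(A)|$) to get $\mathcal{H}_A(n,p)$, and let $\mathcal{G}_A(n,p)$ be the union of the retained copies. Density with respect to $A$ is then a pure first-moment computation: an $N$-set with $N=\lfloor n^{1-\delta_0}\rfloor$ avoids $A$ only if it contains none of its $\Theta(N^a)$ potential planted copies, and the union bound over $N$-sets goes through for $\delta_0=\varepsilon/(2(a-1))$, with no dependence on $\mathcal{F}$. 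For sparsity, every edge of $\mathcal{G}_A(n,p)$ lies in some planted copy of $A$, so a copy of $B'$ must be covered by planted copies $A_1,\dots,A_m$; since $B'$ is $2$-connected and not a subgraph of $A$, any inclusion-minimal such cover has $m\ge 2$ and each $A_i$ meets the union of the others in at least two vertices of $B'$, whence $\sum_i|V(A_i)\cap V(B')|\ge|V(B')|+m$ and the expected number of copies of $B'$ is $O(n^{m\varepsilon})=o(\sqrt{n})$ for $\varepsilon$ small. Deleting one vertex from each of these $O(\sqrt{n})$ copies finishes the proof. The point is that this construction decouples the two requirements whose tension made your exponent window empty: the density of $A$-copies is governed by the hypergraph parameter $p$ alone, while the scarcity of $B'$-copies is forced combinatorially by the $2$-connectivity of $B'$ and not by a second constraint on $p$.
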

\begin{proof}
Let $a\coloneqq |V(A)|$. Let $\varepsilon>0$ be small enough and set $p=n^{1-a+\varepsilon}$. Consider a hypergraph with vertex set $\binom{[n]}{2}$ whose edge set consists of all possible copies of $A$ on $[n]$. Let $\mathcal{H}_A(n,p)$ be its binomial subhypergraph where each copy of $A$ is chosen independently and with probability $p$, and let $\mathcal{G}_A(n,p)$ be the random graph constructed as follows: an edge belongs to $\mathcal{G}_A(n,p)$ if and only if this edge belongs to a copy of $A$ in $\mathcal{H}_A(n,p)$. We shall prove that it suffices to remove $O(\sqrt{n})$ vertices of $\mathcal{G}_A(n,p)$ to get the desired graph \textbf{whp}.

Let $\delta_0=\frac{\varepsilon}{2(a-1)}$. Let us show that \textbf{whp} $\mathcal{G}_A(n,p)$ is $n^{-\delta_0}$-dense with respect to $A$. Set $N=\lfloor n^{1-\delta_0}\rfloor$. Then the expected number of $N$-sets containing no copy of $A$ in $\mathcal{G}_A(n,p)$ is at most the expected number of $N$-subsets $U \subseteq [n]$ such that $\binom{U}{2}$ does not contain any copy of $A$ in $\mathcal{H}_A(n,p)$ that equals to 
\begin{align*}
 \binom{n}{N}(1-p)^{\binom{N}{a}\frac{a!}{\mathrm{aut}(A)}} &\leq
 \exp\left[N\left(\delta_0\ln n+1-p\frac{N^{a-1}}{\mathrm{aut}(A)}\right)(1+o(1))\right]\\
 &\leq
 \exp\left[N\left(\delta_0\ln n-\frac{n^{1-a+\varepsilon+(a-1)(1-\delta_0)}}{\mathrm{aut}(A)}\right)(1+o(1))\right]\\
 &\leq \exp\left[-Nn^{\varepsilon/2}\left(\frac{1}{\mathrm{aut}(A)}-o(1)\right)\right]
  \to 0.
\end{align*}
By the union bound, \textbf{whp} every $N$-set contains at least one copy of $A$ in $\mathcal{G}_A(n,p)$, that is, \textbf{whp} $\mathcal{G}_A(n,p)$ is $n^{-\delta_0}$-dense. 

Let $\delta=\delta_0/2$ and let $C>0$. Note that \textbf{whp} the deletion of any $C\sqrt{n}$ vertices from $\mathcal{G}_A(n,p)$ leads to an $\tilde n^{-\delta}$-dense graph on $\tilde n$ vertices. Indeed, if $\mathcal{G}_A(n,p)$ is $n^{-\delta_0}$-dense, then, since $\tilde{n}^{1 - \delta} = (n - C\sqrt{n})^{1 - 0.5\delta_0} \ge n^{1-\delta_0}$, every set of $\tilde{n}^{1 - \delta}$ vertices in the new graph has at least $n^{1-\delta_0}$ vertices and thus contains a copy of $A$. Therefore, it suffices to prove that \textbf{whp} we can remove $O(\sqrt{n})$ vertices from $\mathcal{G}_A(n,p)$ and get an $\mathcal{F}$-free graph.

Given a graph $B$ and graphs $A_1, \dots, A_m$ isomorphic to $A$, we say that $A_1\cup\ldots\cup A_m$ is an \textit{inclusion-minimal cover} of the edges of $B$ if $E(B) \subseteq E(A_1 \cup \ldots \cup A_m)$ but $E(B) \not\subseteq E(A_1 \cup \ldots \cup A_{i-1} \cup A_{i+1} \cup \ldots \cup A_m)$ for every $i \in [m]$. For every $B\in\mathcal{F}$, consider $B'\subset B$ such that every inclusion-minimal cover $A_1\cup\ldots\cup A_m$ of the edges of $B'$ satisfies $|(A_i\cap\cup_{j\neq i}A_j)\cap B'|\geq 2$ for every $i\in[m]$. By Claim~\ref{cl:main} (stated below), \textbf{whp} the number of copies of $B'$ in $\mathcal{G}_A(n,p)$ is at most $\sqrt{n}$. We can now delete a single vertex from each such copy, and obtain a set of $\tilde n\geq n-|\mathcal{F}|\sqrt{n}$ vertices that induces an $\mathcal{F}$-free graph, as required.
\end{proof}

We note that a slight adjustment of the proof of Theorem \ref{th:dense_1} allows one to argue for the existence of $\mathcal{F}$-free $\varepsilon$-dense graph for \textit{induced} copies of $A$.

\begin{claim}
\textbf{Whp} the number of copies of $B'$ in $\mathcal{G}_A(n,p)$ is at most $\sqrt{n}$.
\label{cl:main}
\end{claim}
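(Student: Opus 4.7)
The plan is to estimate the expectation $\Exp[X]$, where $X$ counts copies of $B'$ in $\mathcal{G}_A(n,p)$, show $\Exp[X]=o(\sqrt n)$, and finish by Markov's inequality.

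If $\tilde B'\subset\mathcal G_A(n,p)$ is a copy of $B'$, then among the copies of $A$ selected in $\mathcal H_A(n,p)$ that share an edge with $\tilde B'$ one can extract an inclusion-minimum subcollection $\mathbf A=(A_1,\ldots,A_m)$ that still covers all edges of $\tilde B'$. A union bound over such subcollections gives
\[
\Exp[X]\le\sum_{\tilde B'}\sum_{\mathbf A}p^m,
\]
where the inner sum ranges over inclusion-minimum covers of the edges of $\tilde B'$ by copies of $A$ in $[n]$. Since $m\le|E(B')|$ and each cover involves at most $ma$ vertices, there are only $O(1)$ isomorphism types of $\mathbf A$; for each type the number of embeddings in $[n]$ is at most $n^{|V(\mathbf A)|}$ where $V(\mathbf A)\defeq V(A_1)\cup\cdots\cup V(A_m)$. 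Hence it suffices to show $n^{|V(\mathbf A)|}p^m\le n^{1/2-\Omega(1)}$ for every such cover structure.

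The heart of the proof is the vertex-count inequality $|V(\mathbf A)|\le m(a-1)$. For each $v\in V(\mathbf A)$ set $d(v)=|\{i:v\in V(A_i)\}|$, so $\sum_v d(v)=ma$. Summing the hypothesis $|(A_i\cap\bigcup_{j\ne i}A_j)\cap B'|\ge 2$ over $i\in[m]$ yields
\[
\sum_{v\in V(B'):\,d(v)\ge 2} d(v)\;\ge\;2m.
\]
Writing $N_1=|\{v\in V(\mathbf A):d(v)=1\}|$ and $N_{\ge 2}=|\{v\in V(\mathbf A):d(v)\ge 2\}|$, we have $\sum_{v:\,d(v)\ge 2}d(v)=ma-N_1$, so $ma-N_1\ge 2m$ and thus $N_1\le m(a-2)$. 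Combined with $N_{\ge 2}\le\tfrac12(ma-N_1)$, this yields
\[
|V(\mathbf A)|=N_1+N_{\ge 2}\le \tfrac{N_1+ma}{2}\le m(a-1).
\]

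Substituting, $n^{|V(\mathbf A)|}p^m\le n^{m(a-1)+m(1-a+\varepsilon)}=n^{m\varepsilon}$. Since $m$ is bounded by a constant depending only on $B$, taking $\varepsilon$ sufficiently small yields $\Exp[X]=n^{O(\varepsilon)}=o(\sqrt n)$, and Markov's inequality concludes. The only non-routine step is the vertex-count inequality; this is the single place where the defining property of $B'$ is used, and the bound is tight against the chosen density $p=n^{1-a+\varepsilon}$ of $\mathcal H_A(n,p)$.
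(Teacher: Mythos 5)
Your proof is correct and follows essentially the same route as the paper's: a first-moment bound obtained by a union bound over inclusion-minimum covers of $E(B')$ by copies of $A$, a double-counting inequality exploiting that each $A_i$ meets the rest of the cover in at least two vertices of $B'$, and Markov's inequality. Your vertex-count inequality $|V(\mathbf{A})|\le m(a-1)$ is equivalent in effect to the paper's $\sum_i v_i\ge b+\ell$ (the paper organizes the count as $n^{b}\cdot n^{a\ell-\sum_i v_i}p^{\ell}$ rather than $n^{|V(\mathbf{A})|}p^{m}$), and both are established by the same argument that vertices covered at least twice contribute at least $2$ to the degree sum.
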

\begin{proof}
Let $b\coloneqq |V(B')|$ and let $k\coloneqq |E(B')|$. Let $X_{B'}$ be the number of copies of $B'$ in $\mathcal{G}_A(n,p)$. We shall bound $\mathbb{E}X_{B'}$ from above.

A copy of $B'$ may appear in $\mathcal{G}_A(n,p)$ only through hyperedges $A_1,\ldots,A_{\ell}\in E(\mathcal{H}_A(n,p))$ such that $B'\subset A_1\cup\ldots\cup A_{\ell}$. For any possible inclusion-minimal cover of edges of $B'$ by copies $A_1,\ldots,A_{\ell}$ of $A$, let us denote by $v_i$ the number of vertices in the intersection of $A_i$ and $B'$. Then, each $A_i$ in this cover contributes a factor of $O(n^{a-v_i}p)$ to $\mathbb{E}X_{B'}$. More formally, if $B'=B_1\cup\ldots\cup B_{\ell}$, where each $B_i$ is a subgraph of a copy of $A$, then, for every $i$,
$$
 \mathbb{P}(\exists A'\in\mathcal{H}_A(n,p)\colon A'\supset B_i)=O(n^{a-v_i}p).
$$
Since there are $O(n^b)$ choices of $B'$ in $K_n$, we get that
\begin{align}
 \mathbb{E}X_{B'} & =O\left(n^b\max_{A_1\cup\ldots\cup A_{\ell}\supset B'}n^{a\ell-v_1-\ldots-v_{\ell}}p^{\ell}\right)\notag\\
 &=O\left(n^{b+\max_{A_1\cup\ldots\cup A_{\ell}}(\ell(1+\varepsilon) - v_1 - \ldots - v_{\ell})}\right),
\label{eq:expectation_B}
\end{align}
where the maximum and minimum are taken over all inclusion-minimal covers $A_1\cup\ldots\cup A_{\ell}$ of edges of $B'$ by copies of $A$.

Let $A_1\cup\ldots\cup A_{\ell}$ be an inclusion-minimal cover of the edges of $B'$ by copies of $A$, and let $V_i$ be the set of vertices in the intersection of $A_i$ with $B'$ (as above, we let $v_i=|V_i|$). Since each $V_i$ has at least two common vertices with $\cup_{j\neq i} V_j$ and $\ell\geq 2$, we get
\begin{equation}
\sum_{i=1}^{\ell}v_i\ge |V_1\cup \ldots \cup V_{\ell}|+\ell=b+\ell,
\label{eq:bound_sums_cover}
\end{equation} 
that is $\sum_{i=1}^{\ell}v_i\geq b+\ell$. Indeed, for every $i$, let $S_i=V_i\cap(\cup_{j\neq i}V_j)$, $s_i=|S_i|\geq 2$. Then $V_1\cup\ldots\cup V_{\ell}=S_1\cup\ldots\cup S_{\ell}\cup\Sigma$, where $\Sigma$ is the set of vertices that are covered once. Then $|\Sigma|=\sum_{i=1}^{\ell}(v_i-s_i),$ and $|S_1\cup\ldots\cup S_{\ell}|\leq\frac{1}{2}\sum_{i=1}^{\ell}s_i$, since each vertex in this union is covered at least twice. We thus obtain,
$$
|V_1\cup\ldots\cup V_{\ell}|=|\Sigma|+|S_1\cup\ldots\cup S_{\ell}|\leq\sum_{i=1}^{\ell} v_i-\frac{1}{2}\sum_{i=1}^{\ell} s_i\leq \sum_{i=1}^{\ell} v_i-\ell,
$$ 
where the last inequality follows since each $s_i$ is at least 2. 

We may assume that $\varepsilon<\frac{1}{2k}$. Due to (\ref{eq:expectation_B}) and (\ref{eq:bound_sums_cover}), we get
$$
 \mathbb{E}X_{B'}=O(n^{k\varepsilon})=o(\sqrt{n}).
$$
By Markov's inequality, \textbf{whp} we have less than $\sqrt{n}$ copies of $B'$ in $\mathcal{G}_A(n,p)$.
\end{proof}

We now turn to the proof of our main theorem. 

\begin{proof}[Proof of Theorem \ref{th:main}]
Let $a\coloneqq|V(A)|$ and $b\coloneqq|V(B)|$. If $a=1$, that is, $A=K_1$, then note that $B$ is the empty graph on $b$ vertices and thus every graph on at least $b$ vertices contains a copy of $B$. If $a = b = 2$, then we have $B \subseteq A$ and thus every graph which is $r$-Ramsey with respect to $A$ contains a copy of $B$.

We assume that $a\ge 2, b\ge 3$. We enumerate the vertices of $A$: $V(A)=\{v_1,\ldots,v_a\}$. Given a graph $G$, and a copy $A'$ of $A$ in $G$, we define a mapping $\phi_{A'}:V(A)\to V(G)$ such that for every $v_i\in V(A)$, we set $\phi_{A'}(v_i)$ to be the vertex $v\in V(G)$ which is in the role of $v_i$ in the copy $A'$ of $A$. Given $v\in V(G)$, we denote by $\mathcal{A}_i(v)$ the set of copies $A'$ of $A$ in $G$ for which $\phi_{A'}(v_i)=v$. Furthermore, we denote by $s_i(v)$ the maximal size of a subset of $\mathcal{A}_i(v)$, in which every two copies of $A$ in $G$ intersect only at $v$.

We prove by induction on $\ell$, the minimum size of an $A$-forest of $B$, where the base case $\ell=1$ is trivial.

We now consider two cases separately. First, assume that in an $A$-forest of $B$ of size $\ell$ all components $B_i$ are disjoint. Let $M$ be the maximum size of a family of vertex disjoint copies of $A$ in $G$. Then, we can colour each copy in a maximum family of vertex disjoint copies of $A$ in two separate colours, and colour all the other vertices in $(2M+1)$-th colour, without producing a monochromatic copy $A$. As $G$ is $r$-Ramsey with respect to $A$, we conclude that $r<2M+1$. Since $r\ge 2\ell$, we find $\ell$ disjoint copies of $A$ in $G$, and therefore a copy of $B$ in $G$.

We now turn to the case where, without loss of generality, $B_{\ell}$ intersects $\cup_{i=1}^{\ell-1}B_i$ in an $A$-forest of $B$. Let $\tilde{B}\coloneqq\cup_{i=1}^{\ell-1}B_i$, and let $\{x\}\coloneqq V(\tilde{B})\cap V(B_{\ell})$. We may further assume that for $A\supseteq B_{\ell}$, $x$ corresponds to $v_k$ in $A$, for some $1\le k \le a$.

Let $U=\left\{v\in V(G)\colon s_k(v)\le b-2\right\}$. We require the following claim.
\begin{claim}
$G[U]$ can be coloured in $2(a-1)(b-2)+1$ colours, without a monochromatic copy of $A$.
\end{claim}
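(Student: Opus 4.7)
The plan is to exploit the defining property of $U$---that for every $v \in U$, any family of copies of $A$ placing $v$ in the role of $v_k$ and pairwise intersecting only at $v$ has size at most $b-2$---to produce, for each $v \in U$, a small \emph{blocker set} $W(v) \subseteq V(G) \setminus \{v\}$ of size at most $(a-1)(b-2)$ which meets every element of $\mathcal{A}_k(v)$ outside $\{v\}$. Concretely, I fix any maximum family $\mathcal{A}^*_k(v) \subseteq \mathcal{A}_k(v)$ of copies pairwise intersecting only at $v$, so $|\mathcal{A}^*_k(v)| \le b-2$, and set
\[
W(v) \coloneqq \left(\bigcup_{A' \in \mathcal{A}^*_k(v)} V(A')\right) \setminus \{v\}.
\]
Since the sets $V(A') \setminus \{v\}$ are pairwise disjoint across $A' \in \mathcal{A}^*_k(v)$, we get $|W(v)| \le (a-1)(b-2)$. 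For the blocker property, any $A' \in \mathcal{A}_k(v)$ either lies in $\mathcal{A}^*_k(v)$ (and so $V(A') \setminus \{v\} \subseteq W(v)$) or, by the maximality of $\mathcal{A}^*_k(v)$, shares a vertex other than $v$ with some member of $\mathcal{A}^*_k(v)$; either way $\left(V(A') \cap W(v)\right) \setminus \{v\} \neq \emptyset$.

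Next, I turn this into a colouring via an auxiliary graph $H$ on vertex set $U$: place an edge $\{v,w\}$ whenever $w \in W(v) \cap U$ or $v \in W(w) \cap U$. I claim any proper colouring of $H$ works. Indeed, given a copy $A'$ of $A$ in $G[U]$, let $v$ be its vertex in the role $v_k$; necessarily $v \in U$. By the blocker property there exists $w \in W(v) \cap V(A')$ with $w \ne v$, and since $A' \subseteq G[U]$ we also have $w \in U$. So $\{v,w\} \in E(H)$ and the colouring forces $c(v) \ne c(w)$, ensuring $A'$ is not monochromatic.

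It remains to show $\chi(H) \le 2(a-1)(b-2) + 1$, for which I will show that $H$ is $2(a-1)(b-2)$-degenerate: for any $S \subseteq U$, every edge of $H[S]$ is witnessed by some $v \in S$ with $w \in W(v) \cap S$, so $|E(H[S])| \le |S| \cdot (a-1)(b-2)$, giving average degree and hence minimum degree at most $2(a-1)(b-2)$ in $H[S]$. A standard greedy argument---repeatedly remove a minimum-degree vertex and colour the resulting sequence in reverse---then produces a proper colouring of $H$ with $2(a-1)(b-2)+1$ colours.

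The main subtlety I expect is setting up the blocker property cleanly, and noticing that in any copy of $A$ lying inside $G[U]$ the $v_k$-vertex automatically lies in $U$, so it is the right vertex to pivot on; once these are in place, the degeneracy bound and greedy colouring are routine.
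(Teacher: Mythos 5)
Your proof is correct and takes essentially the same approach as the paper: the paper likewise fixes, for each $v\in U$, a maximal family of copies of $A$ pairwise meeting only at $v$ (of size at most $b-2$ by the definition of $U$), directs edges from $v$ to the at most $(a-1)(b-2)$ other covered vertices, and properly colours the resulting $2(a-1)(b-2)$-degenerate auxiliary graph, with maximality of the family supplying the blocking vertex. The only cosmetic difference is that the paper restricts the maximal family to copies lying inside $G[U]$, whereas you take a maximum family in all of $G$ and then observe that the blocking vertex of a copy contained in $G[U]$ automatically lies in $U$; both variants work.
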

\begin{proof}
For every $v\in U$, let $\mathcal{S}_k(v)$ be a maximal by inclusion subfamily of $\mathcal{A}_k(v)$ composed of copies of $A$ in $G[U]$, where every two copies of $A$ in the subfamily intersect only at $v$, and let $S_k(v)=\cup_{A'\in \mathcal{S}_k(v)}V(A')$. By definition of $U$, $|\mathcal{S}_k(v)|\le b-2$ and $|S_k(v)|\le (a-1)(b-2)+1$. 

Define an auxiliary directed graph $\overrightarrow{\Gamma}$ on the vertices of $U$, where for every $v$ and for every $u\in S_k(v)\setminus\{v\}$, $\overrightarrow{\Gamma}$ contains a directed edge from $v$ to $u$. We thus have that $\Delta^{+}(\overrightarrow{\Gamma})\le (a-1)(b-2)$. Hence, the underlying undirected graph $\Gamma$ is $2(a-1)(b-2)$-degenerate. Indeed, consider $V' \subseteq V(\Gamma)$. We will show that in the induced subgraph $\Gamma[V']$ there exists a vertex of degree at most $2(a-1)(b-2)$. We have
\[
    \sum_{v \in V'} d_{\Gamma[V']}(v) = 2 |E(\Gamma[V'])| \le 2 \sum_{v \in V'} d^{+}_{\overrightarrow{\Gamma}}(v) \le 2(a-1)(b-2) |V'|,
\]
and thus there must be at least one vertex $v \in V'$ with $d_{\Gamma[V']}(v) \le 2(a-1)(b-2)$. Therefore, $\Gamma$ is $\left(2(a-1)(b-2)+1\right)$-colourable. We colour $G[U]$ according to this colouring.

Suppose towards contradiction that there is a monochromatic copy $A'$ of $A$ in $G[U]$, and let $w=\phi_{A'}(v_k)$. Since $A'$ is monochromatic, it does not have common vertices with $S_k(w)$ other than $w$ --- however this contradicts the maximality of $\mathcal{S}_k(w)$.
\end{proof}

Recalling that $G$ is $r$-Ramsey with respect to $A$, and that $G[U]$ can be coloured in $2(a-1)(b-2)+1$ colours without containing a monochromatic copy of $A$, we have that $G[V\setminus U]$ is $\left(r-\left(2(a-1)(b-2)+1\right)\right)$-Ramsey with respect to $A$. Observing that $$r-\left(2(a-1)(b-2)+1\right)\ge (\ell-1)\left(2(a-1)(b-2)+1\right),$$
we have by induction that $G[V\setminus U]$ contains a copy of $\tilde{B}$. Let $v$ be the vertex in this copy of $\tilde{B}$ that corresponds to $x$. Since $v\notin U$ we have that $s_k(v)\ge b-1$, and hence there is a subset of size at least $b-1$ in $\mathcal{A}_k(v)$ such that every two copies $A'$ of $A$ in this subset intersect only at $v$. Noting that $|V(\tilde{B})|\le b-1$, we have that at least one copy $A'$ of $A$ in this subset completes $\tilde{B}$ to $B$, that is, $\tilde{B}\cup A'$ contains a copy of $B$ (see Figure \ref{f:one} for an illustration).
\end{proof}
\begin{figure}[H]
\centering
\includegraphics[width=0.75\textwidth]{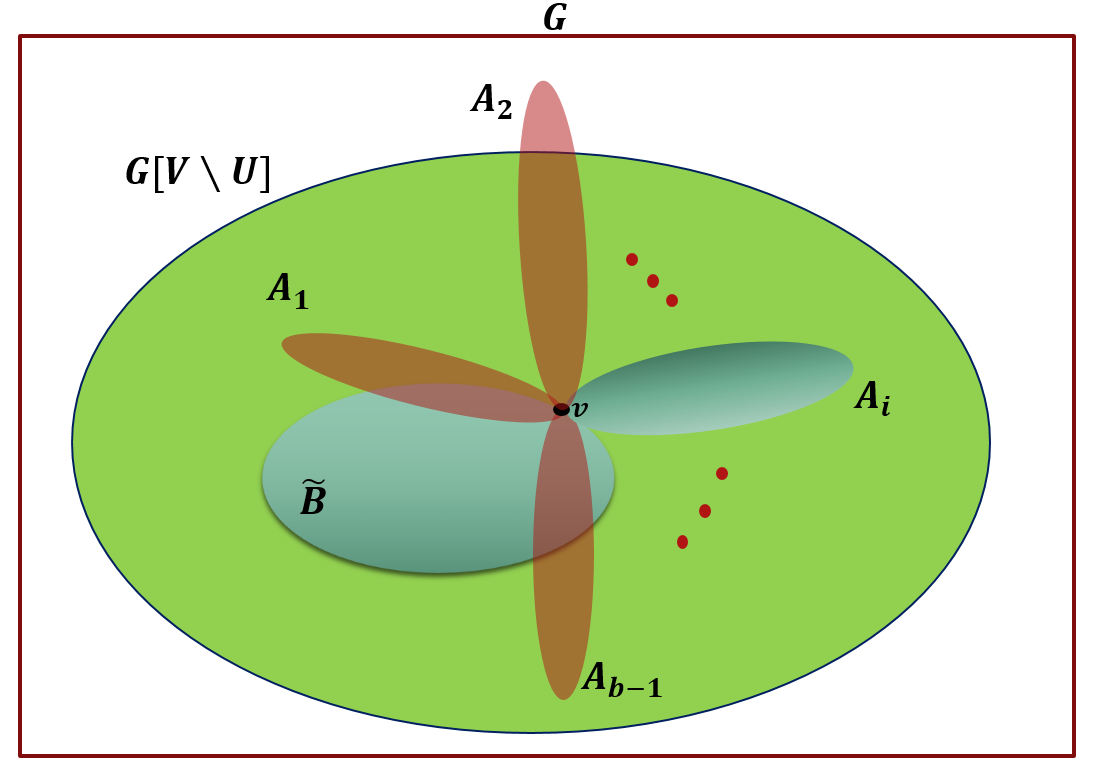}
\caption{The subgraph $G[V\setminus U]$ and a copy of $\tilde B$ in it. A copy $A_i$ of $A$ together with $\tilde{B}$ contain a copy of $B$. Note that some of the $A_j$'s may have vertices outside $V\setminus U$.}
\label{f:one}
\end{figure}

\section{Remarks and observations}\label{remarks}
Let us finish with two remarks.

\begin{remark}
Theorems~\ref{th:dense_1} and \ref{th:main} can be generalised to families of graphs instead of a single graph $A$. Let $\mathcal{A},\mathcal{F}$ be two finite graph families, and $\varepsilon>0$. The proof of Theorem \ref{th:main} is quite similar. For the proof of Theorem \ref{th:dense_1}, let us say that a graph $G$ is an $\mathcal{F}$-free \emph{$\varepsilon$-dense} with respect to $\mathcal{A}$ if it is $B$-free for every $B\in\mathcal{F}$, and every induced subgraph of $G$ on exactly $\lfloor\varepsilon|V(G)|\rfloor$ vertices contains a copy of every $A\in\mathcal{A}$. A graph $B$ is \emph{$\mathcal{A}$-degenerate}, if every $2$-vertex-connected subgraph of it is isomorphic to a subgraph of some $A\in\mathcal{A}$. If every $B\in\mathcal{F}$ is not $\mathcal{A}$-degenerate, then there exists an $\mathcal{F}$-free $\varepsilon$-dense graph with respect to $\mathcal{A}$ --- indeed, let $A$ be the disjoint union of the graphs from $\mathcal{A}$, and apply Theorem \ref{th:dense_1}. 
\end{remark}

\begin{remark}
For a non-$A$-degenerate family $\mathcal{F}$ (consisting of graphs that are not $A$-degenerate) and sufficiently small $\delta>0$, we claim the likely existence of an $\mathcal{F}$-free $n^{-\delta}$-dense subgraph in the binomial random graph $G(n,n^{-2/a+\delta})$, where $a$ is the number of vertices in $A$. Indeed, consider the hypergraph with vertex set $\binom{[n]}{2}$, and edge set being all the possible cliques of size $a$, $K_a$, on $[n]$. Let $\mathcal{H}_a(n,p')$ be its binomial subgraph. Let us first show that there exists a coupling between $\mathcal{H}_{a}(n,p')$, and the graph considered in the proof of sufficiency of Theorem~\ref{th:dense_1}, $\mathcal{H}_{A}(n,p)$, such that $p=\Theta(p')$ and $\mathcal{H}_A(n,p)\subseteq \mathcal{H}_a(n,p')$. Indeed, let $p=n^{1-a+\delta{\binom{a}{2}}}$. Consider an $a$-set, and let $p'$ be the probability that at least one copy of $A$ appears on this $a$-set. Clearly, $p=\Theta(p')$. Let $Q$ be the conditional distribution of a binomial random hypergraph of copies of $A$ on $[a]$, under the condition that at least one such copy exists. We can now draw $\mathcal{H}_{A}(n,p)$ as follows. We first choose every $a$-set with probability $p'$, and then in every set that we chose we construct a random $A$-hypergraph with distribution $Q$, independently for different $a$-sets. We thus have that $\mathcal{H}_A(n,p)\subseteq \mathcal{H}_a(n,p')$, and we can continue the proof in the same manner as in Theorem \ref{th:dense_1}. Now, we take $q$ such that $q^{\binom{a}{2}}= p'$. Therefore, by the above coupling between $\mathcal{H}_a(n,p')$ and $\mathcal{H}_A(n,p)$ and by Theorem \ref{riordan} stated below, \textbf{whp} $G(n,q)\supset \mathcal{G}_{K_a}(n,p')\supset \mathcal{G}_A(n,p)$.
\end{remark}
\begin{thm}[Riordan \cite{Riordan}]\label{riordan}
Let $\varepsilon>0$ be small enough and $q\leq n^{-\frac{2}{a}+\varepsilon}$, $p\sim q^{{\binom{a}{2}}}$. Then there exists a coupling between $G(n,q)$ and $\mathcal{H}_a(n,p)$ such that \textbf{whp} for every edge of $\mathcal{H}_a(n,p)$ there exists a copy of $K_a$ in $G(n,q)$ with the same vertex set.
\end{thm}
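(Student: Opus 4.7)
My plan is to couple $G(n,q)$ with $\mathcal{H}_a(n,p)$ by first sampling $G\sim G(n,q)$, letting $\mathcal{K}=\mathcal{K}(G)$ denote the (random) hypergraph on $[n]$ whose edges are precisely the vertex sets of $K_a$-copies in $G$, and then producing $\mathcal{H}_a(n,p)$ as an independent thinning of $\mathcal{K}$. Since $\mathbb{P}(S\in\mathcal{K})=q^{\binom{a}{2}}\sim p$ for every $a$-subset $S\subseteq[n]$, retaining each edge of $\mathcal{K}$ independently with probability $p/q^{\binom{a}{2}}=1-o(1)$ produces a random subhypergraph $\mathcal{H}'\subseteq\mathcal{K}$ whose marginals match those of $\mathcal{H}_a(n,p)$ exactly. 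Every hyperedge of $\mathcal{H}'$ is by construction witnessed by a $K_a$-copy in $G$, so the remaining task is to upgrade the marginal match into a full distributional match.

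The obstruction is correlation: the indicators $\mathbf{1}[S\in\mathcal{K}]$ are positively correlated whenever $S,T$ share at least two vertices, since then they share edges of $G$. To control this, I would estimate that for each $2\le j\le a-1$ the expected number of ordered pairs $(S,T)$ of $K_a$-copies sharing exactly $j$ vertices is
$$
 O\!\left(n^{2a-j}\,q^{2\binom{a}{2}-\binom{j}{2}}\right)=O\!\left(n^{2-j+j(j-1)/a+O(\varepsilon)}\right).
$$
The function $j\mapsto 2-j+j(j-1)/a$ is convex in $j$, so it is maximised over $2\le j\le a-1$ at the endpoints, where its value equals $2/a$; thus the total pair-overlap mass is $O(n^{2/a+O(\varepsilon)})$. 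Comparing against $\lambda\defeq\mathbb{E}|\mathcal{K}|=\Theta(n^{1+O(\varepsilon)})$ and $\lambda^2=\Theta(n^{2+O(\varepsilon)})$, this mass is $o(\lambda)$ and a fortiori $o(\lambda^2)$ provided $\varepsilon$ is small enough --- which is precisely the content of the hypothesis $q\le n^{-2/a+\varepsilon}$.

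Once the overlap mass is $o(\lambda)$, I would invoke the Chen--Stein method to conclude that the random indicator vector $(\mathbf{1}[S\in\mathcal{K}])_{S}$ is within $o(1)$ total variation of the product Bernoulli measure on $\binom{[n]}{a}$ with parameters $q^{\binom{a}{2}}$. Since thinning is a Markov kernel and hence a total-variation contraction, the law of $\mathcal{H}'$ then lies within $o(1)$ total variation of the law of $\mathcal{H}_a(n,p)$, and Strassen's theorem produces a coupling of $\mathcal{H}'$ with $\mathcal{H}_a(n,p)$ under which $\mathcal{H}'=\mathcal{H}_a(n,p)$ holds \textbf{whp}. Composing this with the original sampling of $G$ yields the claimed coupling.

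The main difficulty I expect is pushing the Chen--Stein bound through on the \emph{full indicator vector} rather than merely on the count $|\mathcal{K}|$, as this requires the dependency structure of $K_a$-copies to decorrelate uniformly over every local neighbourhood; the regime $q\le n^{-2/a+\varepsilon}$ is exactly what makes this sparsity global, since below it overlapping $K_a$-pairs would dominate and Poisson approximation would break down. An alternative, more combinatorial route would be a two-round exposure $G(n,q)=G(n,q_1)\cup G(n,q_2)$ with a tiny sprinkling $q_2$, using the first round to locate $a$-sets that are one edge away from being cliques and exploiting the near-independence of sprinkled edges across vertex-disjoint near-copies; but either way, the sparsity hypothesis on $q$ is what does the real work.
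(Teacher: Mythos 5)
First, note that the paper itself offers no proof of this statement: it is imported verbatim from Riordan \cite{Riordan}, so the only thing to measure your argument against is Riordan's actual proof, which proceeds quite differently (a sequential, one-directional containment coupling). Your proposal has a fatal gap at its core. You aim to realise $\mathcal{H}_a(n,p)$ as being equal, up to an event of probability $o(1)$, to an independent thinning $\mathcal{H}'$ of the clique hypergraph $\mathcal{K}=\mathcal{K}(G(n,q))$, which requires the law of $\mathcal{H}'$ to be within $o(1)$ in total variation of the product measure. That is false, and your own overlap computation shows why. Matching marginals says nothing about the joint distribution: for $a$-sets $S,T$ with $|S\cap T|=j\geq 2$ one has $\mathbb{P}(S,T\in\mathcal{H}')=p^2q^{-\binom{j}{2}}\gg p^2$. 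Consequently the number of pairs of hyperedges sharing at least two vertices has expectation $\Theta\bigl(n^{2a-2}p^2q^{-1}\bigr)=n^{2/a+O(\varepsilon)}$ under $\mathcal{H}'$ but only $\Theta\bigl(n^{2a-2}p^2\bigr)=n^{O(\varepsilon)}$ under $\mathcal{H}_a(n,p)$. A routine second-moment computation shows the former count is concentrated, so this single statistic distinguishes the two measures with probability $1-o(1)$: the total variation distance tends to $1$, not to $0$, and no coupling can make $\mathcal{H}'$ and $\mathcal{H}_a(n,p)$ agree whp. Relatedly, your Chen--Stein step is misapplied: the factor $\min\{1,\lambda^{-1}\}$ that lets you get away with $b_1+b_2=o(\lambda)$ is available only for the one-dimensional count $W=|\mathcal{K}|$; the process version of the Arratia--Goldstein--Gordon theorem bounds the total variation distance of the full indicator vector by $O(b_1+b_2+b_3)$ with no $\lambda^{-1}$ gain, and here $b_1=\Theta\bigl(n^{2a-2}q^{2\binom{a}{2}}\bigr)=n^{\varepsilon a(a-1)}$ and $b_2=n^{2/a+O(\varepsilon)}$ both tend to infinity.

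The statement you are asked to prove is genuinely weaker than distributional identity --- it only asserts the one-directional containment that whp every hyperedge of $\mathcal{H}_a(n,p)$ is witnessed by a clique of $G(n,q)$, i.e.\ a coupling with $\mathcal{H}_a(n,p)\subseteq\mathcal{K}(G(n,q))$ whp --- and this asymmetry is essential, since $\mathcal{K}(G(n,q))$ contains polynomially more overlapping clique pairs than a binomial hypergraph of the same density ever would. Be warned, however, that the containment is not cheap either: although the indicators $\mathds{1}[S\in\mathcal{K}]$ are increasing functions of the edges of $G$ and hence positively associated, positive association with correct marginals does not imply stochastic domination of the product measure, so one cannot shortcut to the containment coupling by an FKG-type argument. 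Riordan's proof instead orders the $a$-sets, reveals them one at a time, and shows that the conditional probability that a given $a$-set spans a clique, given the history, stays above $p$ outside a negligible event; this is where the hypothesis $q\leq n^{-2/a+\varepsilon}$ does its real work. Your closing remark that a two-round sprinkling argument might substitute is only a heuristic and would need to confront exactly the same conditional-dependence issue.
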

We note that Riordan in \cite[Section 5]{Riordan} discusses a coupling between $G(n,q)$ and $\mathcal{H}_A(n,p)$, and provides sufficient conditions for its existence for some $A$, however here we settle for higher values of $q(n)$ with respect to $p(n)$, thus making such coupling simpler.

\paragraph{Acknowledgement.} The authors wish to thank Benny Sudakov for helpful remarks.

\end{document}